\documentclass[11pt,reqno]{amsart}

\usepackage[T1]{fontenc}
\usepackage[utf8]{inputenc}
\usepackage{amsmath,amssymb,amsthm}
\usepackage{mathpazo}
\usepackage[margin=1in]{geometry}
\usepackage{microtype}
\usepackage{hyperref}
\hypersetup{hypertex=true,
	colorlinks=true,
	linkcolor=blue,
	anchorcolor=blue,
	citecolor=blue }

\numberwithin{equation}{section}
\newtheorem{theorem}{Theorem}[section]
\newtheorem{lemma}[theorem]{Lemma}
\newtheorem{proposition}[theorem]{Proposition}
\theoremstyle{remark}
\newtheorem{remark}[theorem]{Remark}

\newcommand{\R}{\mathbb{R}}
\renewcommand{\d}{\,\mathrm{d}}
\newcommand{\Hr}{H^1_{\mathrm{rad}}}

\title[Radial Robin Lane--Emden solutions]{Radial uniqueness and nondegeneracy\\
for the Robin Lane--Emden equation}
\date{}
\subjclass[2020]{Primary 35J60; Secondary 34B15, 35B05, 35B35}
\keywords{Lane--Emden equation, Robin boundary condition, radial uniqueness, nondegeneracy, shooting method}

\begin{document}

\title[Radial Robin Lane--Emden solutions]{Radial uniqueness and nondegeneracy for the Robin Lane--Emden equation}
	
	\author{Jiaguo Li, Tianqi Xu}

\address{Jiaguo Li: School of Mathematical Sciences, Xiamen University, Xiamen, Fujian, 361005, P. R. China}
	\email{19020251153741@stu.xmu.edu.cn}

\address{Tianqi Xu: School of Mathematical Sciences, Xiamen University, Xiamen, Fujian, 361005, P. R. China}
	\email{19020251153763@stu.xmu.edu.cn}

\begin{abstract}
In this paper, we study positive radial solutions of $-\Delta u=u^p$ in a ball $B_R\subset\R^N$, subject to the Robin boundary condition $\partial_\nu u+\beta u=0$, where $N\ge2$ and $\beta>0$. For every Sobolev subcritical exponent $p>1$, we prove the existence of a unique positive radial solution and its nondegeneracy with respect to radial perturbations. The proof relies on the strict monotonicity of the logarithmic slope of the normalized Lane--Emden profile, which follows from the radial Pohozaev identity. When $N\ge3$, the same argument also recovers the classical critical existence range $0<\beta R<N-2$ and proves radial nondegeneracy throughout this interval. At $\beta R=(N-2)/2$, we exhibit $N$ linearly independent nonradial solutions of the linearized problem.
\end{abstract}

\maketitle

\section{Introduction and main results}\label{s1}

In this paper, we are concerned with the following semilinear elliptic problem:
\begin{equation}\label{1-1}
\left\{\begin{aligned}&-\Delta u=u^p &&\text{in }B_R,\\
&u>0 &&\text{in }B_R,\\
&\partial_\nu u+\beta u=0 &&\text{on }\partial B_R,
\end{aligned}\right.
\end{equation}
where $B_R$ is the ball of radius $R>0$ centered at the origin in $\R^N$, $N\ge2$, and $\nu$ denotes the outward unit normal to $\partial B_R$. Throughout the paper, we assume that $\beta>0$ and $p>1$, and write
\begin{equation}\label{1-2}
p_S(N):=\left\{\begin{aligned}&+\infty &&\text{if }N=2,\\
&\frac{N+2}{N-2} &&\text{if }N\ge3.
\end{aligned}\right.
\end{equation}
By a positive radial solution of \eqref{1-1}, we mean a radial function $u\in C^2(\overline{B_R})$ satisfying \eqref{1-1} pointwise. In the subcritical range, every nontrivial nonnegative radial $H^1(B_R)$ weak solution belongs to this class, as recalled in Section~\ref{s3}.

Shooting methods and Pohozaev identities have long been used to study the uniqueness of positive radial solutions to semilinear elliptic equations. For Dirichlet problems, we refer to Ni and Nussbaum \cite{Ni1985}, Erbe and Tang \cite{Erbe1997}, and Shioji and Watanabe \cite{Shioji2013,Shioji2016}. Kwong and Li \cite{Kwong1992} and Kabeya, Yanagida and Yotsutani \cite{Kabeya2002} developed shooting and canonical-form methods for radial problems with various boundary conditions. Uniqueness results for two-point problems with Robin and nonlinear boundary conditions were obtained by Ma and An \cite{Ma2005a,Ma2005b}. Dai and Fu \cite{Dai2007} established related uniqueness and nondegeneracy results for semilinear second-order ordinary differential equations with Robin boundary conditions.

For the Robin problem in the subcritical range, Dai and Fu \cite{Dai2008} proved uniqueness on smooth bounded domains when $\beta$ is sufficiently small, while Fu and Dai \cite{Fu2008} studied positive solutions on annuli. More recently, Chen, Grossi and Li \cite{Chen2026a} determined the leading-order profile as $\beta\to0^+$ on general smooth bounded domains. On a ball, this profile also follows directly from the radial parametrization; see Remark~\ref{re3-3}.

On a disk, Zeraoulia \cite{Zeraoulia2027} established radial uniqueness and constructed branches of positive nonradial solutions. In a recent preprint, Kenioua \cite{Kenioua2026} further studied the first angular degeneracies of the planar radial branch. Chen, Grossi and Li \cite[Theorem 1.3]{Chen2026b} proved radial uniqueness and radial nondegeneracy throughout the subcritical range in dimension two. In dimensions $N\ge3$, their result assumes $\beta\ge N-2$ on the unit ball, or equivalently $\beta R\ge N-2$ on $B_R$. For each fixed $\beta>0$, they also proved uniqueness and full nondegeneracy on general smooth bounded domains when the exponent is sufficiently close to one \cite[Theorem 1.1]{Chen2026b}.

We prove that radial uniqueness and radial nondegeneracy hold for every $\beta>0$ and every $1<p<p_S(N)$. In dimensions $N\ge3$, this removes the condition $\beta R\ge N-2$ in \cite[Theorem 1.3]{Chen2026b}.

\begin{theorem}\label{th1-1}
Let $N\ge2$, $R>0$, $\beta>0$, and $1<p<p_S(N)$. Then problem \eqref{1-1} has a unique positive radial solution $u_{\beta,R}$. Moreover, this solution is nondegenerate with respect to radial perturbations: if $h\in\Hr(B_R)$ is a weak solution of
\begin{equation}\label{1-3}
\left\{\begin{aligned}&-\Delta h-pu_{\beta,R}^{p-1}h=0 &&\text{in }B_R,\\
&\partial_\nu h+\beta h=0 &&\text{on }\partial B_R,
\end{aligned}\right.
\end{equation}
then $h\equiv0$.
\end{theorem}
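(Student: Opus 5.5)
The plan is to reduce everything to the one-parameter scaling family generated by the normalized Lane--Emden profile. Let $w$ be the solution of
\[
-w''-\tfrac{N-1}{r}w'=|w|^{p-1}w,\qquad w(0)=1,\quad w'(0)=0 .
\]
Since $1<p<p_S(N)$, $w$ has a first zero $r_0<\infty$ (classical). On $(0,r_0]$ we have $w'<0$. Any positive radial $C^2$ solution $u$ of \eqref{1-1} satisfies $u(0)>0$ and $u'(0)=0$. With $\lambda:=u(0)^{(p-1)/2}$, uniqueness for the radial ODE initial value problem then gives
\[
u(r)=\lambda^{2/(p-1)}w(\lambda r).
\]
Positivity on $\overline{B_R}$ forces $s:=\lambda R\in(0,r_0)$; indeed $u(R)=0$ is excluded because the Robin condition would then give $u'(R)=0$, contradicting $w'<0$. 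The Robin condition $u'(R)+\beta u(R)=0$ reads $\lambda w'(\lambda R)+\beta w(\lambda R)=0$, i.e.
\[
g(s):=-\frac{s\,w'(s)}{w(s)}=\beta R .
\]
Conversely, each root $s\in(0,r_0)$ gives a solution. Now $g$ is continuous on $[0,r_0)$ with $g(0)=0$ and $g(s)\to+\infty$ as $s\to r_0^-$, since $w(r_0)=0$ and $w'(r_0)<0$. So existence follows, and uniqueness follows once $g'>0$ on $(0,r_0)$.

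The key step is this strict monotonicity. Write $q=-w'/w>0$ and use the equation for $w''$. A direct computation gives
\[
g'(s)=s\,w^{p-1}-(N-2)q+s\,q^2 .
\]
For $N=2$ this is clearly positive. For $N\ge3$ I will use the radial Pohozaev identity on $[0,s]$, obtained by multiplying the equation by $r^N w'$ and by $r^{N-1}w$ and integrating:
\[
s^N\Big(\tfrac12 w'^2+\tfrac{1}{p+1}w^{p+1}\Big)+\tfrac{N-2}{2}s^{N-1}ww'=\Big(\tfrac{N}{p+1}-\tfrac{N-2}{2}\Big)\int_0^s r^{N-1}w^{p+1}\d r .
\]
The right-hand side is strictly positive precisely because $p<\frac{N+2}{N-2}$. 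Dividing by $s^{N-1}w^2/2$ yields
\[
s q^2+\tfrac{2}{p+1}s\,w^{p-1}-(N-2)q>0 .
\]
Since $\frac{2}{p+1}<1$ and $s\,w^{p-1}>0$, it follows that $g'(s)>0$. I expect this inequality to be the main obstacle; matching the Pohozaev terms exactly to the expression for $g'$ is where the subcriticality enters.

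For radial nondegeneracy, a radial weak solution $h\in\Hr(B_R)$ of \eqref{1-3} is $C^2$ by elliptic regularity (the coefficient $pu^{p-1}$ is smooth and bounded). Hence $h(r)$ solves the linearized radial ODE with $h'(0)=0$. The regular solutions of this ODE form a one-dimensional space. It is spanned by
\[
\phi:=\partial_\lambda\big[\lambda^{2/(p-1)}w(\lambda r)\big],
\]
with $\phi(0)=\tfrac{2}{p-1}\lambda^{(3-p)/(p-1)}\neq0$. So $h=c\phi$. Differentiating the identity $u_\lambda'(R)+\beta u_\lambda(R)=\lambda^{2/(p-1)}w(\lambda R)\,\big(\beta-g(\lambda R)/R\big)$ in $\lambda$ at the solution, where the bracket vanishes, gives
\[
\phi'(R)+\beta\phi(R)=-\lambda^{2/(p-1)}w(s)\,g'(s)\neq0 .
\]
Hence the Robin condition forces $c=0$, so $h\equiv0$.
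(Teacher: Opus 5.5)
Your proposal is correct and is essentially the paper's proof. It uses the dilation reduction to $-s\,w'(s)/w(s)=\beta R$, and the splitting $g'(s)=\frac{2P(s)}{s^{N-1}w(s)^2}+\frac{p-1}{p+1}\,s\,w(s)^{p-1}$, with $P$ the radial Pohozaev quantity, which is exactly the paper's identity for $\rho Z'$. Radial nondegeneracy then comes, as in the paper, from the scaling derivative, which spans the regular radial kernel and has nonzero Robin residual.

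The differences are cosmetic. You parametrize by $\lambda=\tau/R$ and handle $N=2$ separately. You also quote as standard two facts that the paper proves in Lemmas~\ref{le2-2} and~\ref{le3-2}: that the first zero is finite and simple, and that regular radial solutions of the linearized equation form an at most one-dimensional space.
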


To prove the theorem, we express the Robin boundary condition in terms of the normalized Lane--Emden profile. Let $V$ be the regular radial solution with $V(0)=1$ and $V'(0)=0$, and let $\rho_*$ denote its first zero. By scaling, every positive radial solution is obtained from $V$, and the Robin condition reduces to
\[
Z(\tau)=\beta R,\qquad Z(\rho):=-\frac{\rho V'(\rho)}{V(\rho)}.
\]
Combining the equation for $Z$ with the radial Pohozaev identity for the quantity $P$ defined in \eqref{2-4}, we have
\begin{equation}\label{1-4}
\rho Z'(\rho)=\frac{2P(\rho)}{\rho^{N-2}V(\rho)^2}+\frac{p-1}{p+1}\rho^2V(\rho)^{p-1}>0.
\end{equation}
The first zero is finite in the subcritical range, and the endpoint limits are $Z(0^+)=0$ and $Z(\rho_*^-)=+\infty$. Thus $Z$ is a strictly increasing bijection from $(0,\rho_*)$ onto $(0,+\infty)$, and each $\beta>0$ determines a unique scale. Differentiating the dilation family at this scale, we obtain a nonzero regular radial solution of the linearized equation. This derivative spans the space of regular radial solutions and has a nonzero Robin boundary residual by \eqref{1-4}. Hence the radial kernel is trivial.

\begin{remark}
The existence and uniqueness assertions in the above theorem may also follow from the results in \cite{Kabeya2002} through suitable but non-obvious transformations and additional arguments. We became aware of this possible connection only after completing the present work, during a further literature search and comparison of results undertaken with the assistance of OpenAI tools.
\end{remark}

The same approach applies at the critical exponent, where the radial existence criterion was proved by Wang \cite[Theorem 4.2]{Wang1991}. Radial uniqueness in dimension three is contained in \cite{Kabeya2001}, while uniqueness and radial nondegeneracy in dimensions $N\ge4$ are treated in \cite[Section 5]{Kabeya2006}. The explicit boundary calculation used in \cite[Lemma 5.2]{Kabeya2006} also applies when $N=3$.

For completeness and comparison with Theorem~\ref{th1-1}, we collect these facts in the following theorem; no novelty is claimed for this critical result. We state it in our notation and include a short proof based on the explicit normalized profile, whose logarithmic slope increases from $0$ to $N-2$.

\begin{theorem}\label{th1-2}
Let $N\ge3$, $R>0$, $\beta>0$, and $p=p_S(N)$. Problem \eqref{1-1} has a positive radial solution if and only if
\begin{equation}\label{1-5}
0<\beta R<N-2.
\end{equation}
Whenever it exists, this solution is unique among positive radial solutions and is nondegenerate with respect to radial perturbations.
\end{theorem}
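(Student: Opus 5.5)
We use the same scaling reduction as for Theorem~\ref{th1-1}, now with the explicit critical profile. For $p=p_S(N)$, the regular radial solution of $-\Delta V=V^p$ with $V(0)=1$, $V'(0)=0$ is the Aubin--Talenti bubble
\[
V(\rho)=\Bigl(1+\frac{\rho^2}{N(N-2)}\Bigr)^{-(N-2)/2},
\]
which is positive on all of $(0,\infty)$. Any positive radial solution of \eqref{1-1} is regular at the origin, so it equals $u(x)=\lambda V(\lambda^{(p-1)/2}|x|)$ with $\lambda=u(0)>0$. Indeed, $\mu^{2/(p-1)}V(\mu\,\cdot)$ solves the same equation, and ODE uniqueness for the regular initial value problem forces this form. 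Put $\tau=\lambda^{(p-1)/2}R\in(0,\infty)$. The Robin condition $u'(R)+\beta u(R)=0$ then becomes $Z(\tau)=\beta R$ with $Z(\rho)=-\rho V'(\rho)/V(\rho)$.

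Differentiating the logarithm of $V$ gives
\[
Z(\rho)=\frac{(N-2)\rho^2/N(N-2)}{1+\rho^2/N(N-2)}=(N-2)\frac{s}{1+s},\qquad s=\frac{\rho^2}{N(N-2)}.
\]
This is strictly increasing in $\rho$, tends to $0$ as $\rho\to0^+$, and tends to $N-2$ as $\rho\to\infty$. So $Z$ is a bijection from $(0,\infty)$ onto $(0,N-2)$. A positive radial solution therefore exists if and only if $0<\beta R<N-2$, which is \eqref{1-5}. When it exists, the scale $\tau$ is unique, hence $\lambda$ is unique and so is the solution. Unlike the subcritical case, no Pohozaev argument is needed, because $Z'>0$ can be read off directly. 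We should still check that the formula \eqref{1-4} is consistent, since the right-hand side there is positive in any case.

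For radial nondegeneracy, let $h\in\Hr(B_R)$ be a weak solution of \eqref{1-3}. Radial elliptic regularity and the regular ODE theory show that $h$ is a classical radial solution, regular at $0$. The space of such solutions of the linearized ODE is one-dimensional, since it is fixed by $h(0)$. It is spanned by
\[
w=\partial_\lambda\big|_{\lambda=\lambda_0}\bigl[\lambda V(\lambda^{(p-1)/2}r)\bigr]=V(\sigma r)+\tfrac{p-1}{2}\,\sigma rV'(\sigma r),\qquad \sigma=\lambda_0^{(p-1)/2},
\]
which satisfies $w(0)=1\neq0$. So $h=cw$, and it suffices to show that $w$ violates the Robin condition, that is, $B:=w'(R)+\beta w(R)\neq0$.

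Define $F(\lambda)=u_\lambda'(R)+\beta u_\lambda(R)$, where $u_\lambda=\lambda V(\lambda^{(p-1)/2}r)$. Then $B=F'(\lambda_0)$. We can rewrite
\[
F(\lambda)=\frac{u_\lambda(R)}{R}\bigl(\beta R-Z(\lambda^{(p-1)/2}R)\bigr).
\]
At $\lambda_0$ the bracket vanishes. Hence
\[
F'(\lambda_0)=-\frac{u_{\lambda_0}(R)}{R}\,Z'(\tau)\,\tfrac{p-1}{2}\lambda_0^{(p-3)/2}R,
\]
which is nonzero because $u_{\lambda_0}(R)>0$ and $Z'(\tau)>0$. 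Therefore $c=0$ and $h\equiv0$.

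The only delicate step is the regularity reduction: showing that an $\Hr$ weak solution of \eqref{1-3} is a regular classical radial solution, so that the one-dimensional argument applies. Since $u_{\beta,R}$ is smooth and bounded on $\overline{B_R}$, the potential $pu_{\beta,R}^{p-1}$ is bounded. Standard elliptic regularity then gives this directly, even at the critical exponent. It is the same step used in Section~\ref{s3} for Theorem~\ref{th1-1}.
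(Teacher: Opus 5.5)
Your proposal is correct and takes essentially the same route as the paper. The explicit bubble gives $Z(\rho)=(N-2)\rho^2/(N(N-2)+\rho^2)$, a strictly increasing bijection onto $(0,N-2)$, which yields existence exactly when $0<\beta R<N-2$ together with uniqueness; nondegeneracy holds because the scaling derivative spans the regular radial kernel (as in Lemma~\ref{le3-2}) and its Robin residual is a nonzero multiple of $-Z'(\tau)$. The only difference is that you parametrize the scaling family by the central value $\lambda=u(0)$ instead of by $\tau$, which is purely cosmetic.
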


At $\beta R=(N-2)/2$, varying the center and scale of a critical bubble together preserves the Robin boundary condition. The derivatives of this family at the radial solution are $N$ linearly independent elements of the full linearized kernel. We verify this in Remark~\ref{re4-1}. For related classification questions on the unit ball, we refer to Dou, Hu and Xu \cite{Dou2027}.

The paper is organized as follows. In Section~\ref{s2}, we establish the properties of the normalized profile needed for the shooting argument. Section~\ref{s3} contains the proof of Theorem~\ref{th1-1}, the limiting behavior of the radial branch, and an alternative proof of radial nondegeneracy. The critical case is treated in Section~\ref{s4}.

\section{The normalized Lane--Emden profile}\label{s2}

Throughout this section, we fix $N\ge2$ and $1<p<p_S(N)$. We shall prove that the normalized profile has a finite, simple first zero and that its logarithmic slope is strictly increasing before this zero. We use $\rho$ for the radial variable in the normalized equation and reserve $r=|x|$ for the radius in $B_R$.

Let $V$ be the unique regular solution of
\begin{equation}\label{2-1}
V''+\frac{N-1}{\rho}V'+V^p=0,\qquad V(0)=1,\qquad V'(0)=0,
\end{equation}
on its maximal interval of positivity. At the origin, the equation is understood through the integral identity
\begin{equation}\label{2-2}
-V'(\rho)=\rho^{1-N}\int_0^\rho s^{N-1}V(s)^p\d s.
\end{equation}
Local existence and uniqueness follow from the corresponding Volterra equation
\[
V(\rho)=1-\int_0^\rho t^{1-N}\int_0^t s^{N-1}V(s)^p\d s\d t.
\]
In particular, as $\rho\to0^+$, we have
\begin{equation}\label{2-3}
V(\rho)=1-\frac{\rho^2}{2N}+O(\rho^4),\qquad V'(\rho)=-\frac{\rho}{N}+O(\rho^3).
\end{equation}
Set
\[
\rho_*:=\sup\{\rho>0:V(s)>0\text{ for }0\le s<\rho\}\in(0,+\infty].
\]
By \eqref{2-2}, $V' <0$ on $(0,\rho_*)$.

The radial Pohozaev identity is useful here both for proving that $\rho_*$ is finite and for determining the sign of $Z'$. Following \cite{Pohozaev1965}, define
\begin{equation}\label{2-4}
P(\rho):=\rho^N\left(\frac{V'(\rho)^2}{2}+\frac{V(\rho)^{p+1}}{p+1}\right)+\frac{N-2}{2}\rho^{N-1}V(\rho)V'(\rho).
\end{equation}

\begin{lemma}\label{le2-1}
For every $0<\rho<\rho_*$, we have
\begin{equation}\label{2-5}
P'(\rho)=\left(\frac{N}{p+1}-\frac{N-2}{2}\right)\rho^{N-1}V(\rho)^{p+1}>0.
\end{equation}
Consequently, $P(\rho)>0$ on $(0,\rho_*)$.
\end{lemma}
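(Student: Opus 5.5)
The plan is to differentiate $P$ directly and use the equation \eqref{2-1} in the form $(\rho^{N-1}V')'=-\rho^{N-1}V^p$, equivalently $V''=-\frac{N-1}{\rho}V'-V^p$. The positivity of $P$ then comes from integrating $P'$ from $0$, once we know that $P(0^+)=0$.

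\textbf{Step 1: compute $P'$.} Write $P=A+B$ with $A=\rho^N\bigl(\tfrac12V'^2+\tfrac{1}{p+1}V^{p+1}\bigr)$ and $B=\tfrac{N-2}{2}\rho^{N-1}VV'$. For the first piece,
\[
A'=N\rho^{N-1}\Bigl(\tfrac12V'^2+\tfrac{1}{p+1}V^{p+1}\Bigr)+\rho^N V'(V''+V^p)
=\tfrac N2\rho^{N-1}V'^2+\tfrac{N}{p+1}\rho^{N-1}V^{p+1}-(N-1)\rho^{N-1}V'^2 .
\]
For the second piece, using $(\rho^{N-1}V')'=-\rho^{N-1}V^p$,
\[
B'=\tfrac{N-2}{2}\bigl(\rho^{N-1}V'^2-\rho^{N-1}V^{p+1}\bigr).
\]
The coefficients of $\rho^{N-1}V'^2$ sum to $\tfrac N2-(N-1)+\tfrac{N-2}{2}=0$. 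What remains is $\bigl(\tfrac{N}{p+1}-\tfrac{N-2}{2}\bigr)\rho^{N-1}V^{p+1}$, which is \eqref{2-5}.

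\textbf{Step 2: sign of the coefficient.} For $N=2$ the coefficient is $\tfrac{2}{p+1}>0$. For $N\ge3$, the inequality $\tfrac{N}{p+1}>\tfrac{N-2}{2}$ is equivalent to $p+1<\tfrac{2N}{N-2}$, that is, $p<p_S(N)$. Since $V>0$ on $(0,\rho_*)$, we get $P'>0$ there.

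\textbf{Step 3: behavior at the origin.} By \eqref{2-3}, $V=O(1)$ and $V'=O(\rho)$ as $\rho\to0^+$. Hence $A=O(\rho^N)$ and $B=O(\rho^N)$, so $P(\rho)\to0$. Because $V$ is $C^2$ on $(0,\rho_*)$, $P$ is $C^1$ there. Therefore
\[
P(\rho)=\lim_{\varepsilon\to0^+}\int_\varepsilon^\rho P'(s)\d s>0 \qquad\text{for }\rho\in(0,\rho_*).
\]

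The only delicate point is the regularity near $\rho=0$, where the equation is singular. It is handled by working on $(\varepsilon,\rho)$ and letting $\varepsilon\to0$ with the expansion \eqref{2-3}. The algebraic cancellation in Step 1 is routine.
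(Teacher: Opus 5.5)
Your proof is correct and matches the paper's: you differentiate $P$ using \eqref{2-1}, get a positive coefficient exactly when $p<p_S(N)$, and integrate from $0$ using $P(0^+)=0$ from \eqref{2-3}. Your explicit cancellation of the $\rho^{N-1}V'^2$ terms and the $\varepsilon\to0^+$ limit simply fill in details the paper leaves as routine.
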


\begin{proof}
Differentiating \eqref{2-4} and using \eqref{2-1}, we obtain \eqref{2-5}. By \eqref{2-3}, $P(0):=\lim_{\rho\to0^+}P(\rho)=0$. Integrating \eqref{2-5}, we have
\begin{equation}\label{2-6}
P(\rho)=\left(\frac{N}{p+1}-\frac{N-2}{2}\right)\int_0^\rho s^{N-1}V(s)^{p+1}\d s.
\end{equation}
Since $1<p<p_S(N)$, the coefficient is positive. The conclusion follows from $V>0$ on $(0,\rho_*)$.
\end{proof}

We next prove that the first zero is finite. The argument combines the positivity of $P$ with a decay estimate derived from \eqref{2-2}.

\begin{lemma}\label{le2-2}
The first zero of $V$ is finite and simple. More precisely,
\begin{equation}\label{2-7}
\rho_*<+\infty,\qquad V(\rho_*)=0,\qquad V'(\rho_*)<0.
\end{equation}
\end{lemma}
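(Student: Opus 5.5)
We argue by contradiction: suppose $\rho_*=+\infty$, so $V>0$ and $V'<0$ on $(0,\infty)$. Then $V$ decreases to a limit $\ell\ge0$. First we show $\ell=0$. If $\ell>0$, then \eqref{2-2} gives $-V'(\rho)\ge \ell^p\rho/N$. Integrating shows $V\to-\infty$, which is a contradiction.

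Next we derive a decay estimate from \eqref{2-2}. Since $V$ is decreasing, $\int_0^\rho s^{N-1}V(s)^p\d s\ge V(\rho)^p\rho^N/N$, so $-V'(\rho)\ge \rho V(\rho)^p/N$. This means $\frac{d}{d\rho}V^{1-p}\ge (p-1)\rho/N$. Integrating from $0$ gives
\[
V(\rho)\le C\rho^{-2/(p-1)}\qquad(\rho\ge1).
\]
We also bound $V'$ using \eqref{2-2} and $V\le1$. For $\rho\ge1$, $|V'(\rho)|\le\rho^{1-N}\bigl(C+\int_1^\rho s^{N-1-2p/(p-1)}\d s\bigr)$, so $|V'|\lesssim \rho^{1-N}+\rho^{-1-2/(p-1)}$, up to logarithms in borderline cases.

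Now insert these bounds into \eqref{2-4} to show $P(\rho)\to0$ along some sequence $\rho_k\to\infty$, or at least that $\liminf P\le 0$. Then $P(\rho)\le 0$ for large $\rho$, which contradicts \eqref{2-6}, since $P$ is positive and increasing. The terms scale as follows:
\begin{itemize}
\item $\rho^N V^{p+1}\lesssim \rho^{N-2(p+1)/(p-1)}$, and this exponent is negative exactly when $p<p_S(N)$, since $N<2(p+1)/(p-1)\iff (N-2)p<N+2$;
\item $\rho^N V'^2\lesssim \rho^{2-N}+\rho^{N-2-4/(p-1)}$, whose exponents are negative for $N\ge3$ and subcritical $p$;
\item $\rho^{N-1}VV'$ satisfies similar bounds.
\end{itemize}

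The main obstacle is $N=2$, where $\rho^{2-N}=1$ does not decay. Here the term $\rho^N V'^2/2=\frac12(\rho V')^2$, and $\rho V'\to -\int_0^\infty sV^p\d s$, which is nonzero. In this case I would argue directly instead. We have $-\rho V'(\rho)\ge c>0$ for $\rho\ge1$, so $V(\rho)\le V(1)-c\log\rho\to-\infty$, which is a contradiction. This gives $\rho_*<\infty$ for $N=2$ without Pohozaev.

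For $N\ge3$, I also want to avoid borderline logarithms. I would bootstrap: improved decay of $V$ gives improved decay of $V'$ through \eqref{2-2}. Alternatively, I would use the mean-value property and pick a sequence where $\rho|V'|\lesssim V$. Since $V\to0$ monotonically, there exist $\rho_k\to\infty$ with $\rho_k|V'(\rho_k)|\le 2V(\rho_k)/\log$-type bounds, but the direct power bound should suffice. In the worst case, a weaker bound $\rho^{N}V'^2\to0$ along a sequence still gives $\liminf_{\rho\to\infty} P(\rho)\le 0$, because $\rho^{N-1}VV'<0$. So the decisive terms are only $\rho^NV'^2$ and $\rho^NV^{p+1}$, which is convenient: we just need both to tend to zero along a sequence.

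Once $\rho_*<\infty$, continuity gives $V(\rho_*)=0$. The formula \eqref{2-2} at $\rho_*$, with $V>0$ on $[0,\rho_*)$, gives $V'(\rho_*)=-\rho_*^{1-N}\int_0^{\rho_*}s^{N-1}V^p\d s<0$. This shows that the zero is simple.
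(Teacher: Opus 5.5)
Your proof is correct, but it reaches the contradiction by a different route from the paper in how it controls $V'$.

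Both arguments start from the same decay bound $V\lesssim\rho^{-2/(p-1)}$ derived from \eqref{2-2}. The paper then multiplies \eqref{2-1} by $\rho^{N-1}V$ to get $\int_0^\infty\rho^{N-1}V'^2\d\rho<\infty$. From the integrability of $G$ it extracts a sequence $\rho_j$ with $\rho_j^N(V'^2+V^{p+1})\to0$, and it handles the cross term by Cauchy--Schwarz together with $\rho^{N-2}V^2\to0$. This treats all $N\ge2$ uniformly, with no case analysis.

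You instead feed the decay of $V$ back into \eqref{2-2} to get a pointwise bound on $V'$. For $N\ge3$ this gives $\rho^NV'^2\to0$ along the whole half-line, not just along a sequence. The logarithm in the borderline case $p=N/(N-2)$ is harmless, since $\rho^{2-N}\log^2\rho\to0$, so you can drop the bootstrap and the vague ``mean-value'' alternative. You then discard the cross term by its sign, $\frac{N-2}{2}\rho^{N-1}VV'\le0$. This is a genuine simplification, because only an upper bound on $P$ is needed, and it makes the second half of \eqref{2-10} unnecessary.

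The price is that your pointwise bound $|V'|\lesssim\rho^{1-N}$ only gives $\rho^NV'^2=O(1)$ when $N=2$, so you need a separate planar argument. That argument is correct: $-\rho V'(\rho)\ge\int_0^1sV^p\d s=:c>0$ for $\rho\ge1$, hence $V(\rho)\le V(1)-c\log\rho$. It also shows that in the plane $\rho_*<\infty$ for every $p>1$ without any Pohozaev input, a fact the paper's uniform argument does not isolate.

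One small point needs a sentence: $V$ is only defined on its maximal interval of positivity. Before saying ``continuity gives $V(\rho_*)=0$'', note that $0<V\le1$ and \eqref{2-2} keep $V$ and $V'$ bounded on $[0,\rho_*)$. Hence $V$ extends continuously to $\rho_*$, and the definition of $\rho_*$ then forces $V(\rho_*)=0$.
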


\begin{proof}
Suppose, to the contrary, that $V$ remains positive on $(0,+\infty)$. By \eqref{2-2} and the monotonicity of $V$, we have
\begin{equation}\label{2-8}
-V'(\rho)\ge\rho^{1-N}V(\rho)^p\int_0^\rho s^{N-1}\d s=\frac{\rho}{N}V(\rho)^p.
\end{equation}
Integrating this inequality, we obtain
\begin{equation}\label{2-9}
V(\rho)\le\left(1+\frac{p-1}{2N}\rho^2\right)^{-\frac{1}{p-1}}.
\end{equation}
Since $p<p_S(N)$, we have $2(p+1)/(p-1)>N$. In view of \eqref{2-9}, it follows that
\begin{equation}\label{2-10}
\int_0^{+\infty}\rho^{N-1}V(\rho)^{p+1}\d\rho<+\infty,\qquad \rho^{N-2}V(\rho)^2\to0\quad\text{as }\rho\to+\infty.
\end{equation}

Multiplying \eqref{2-1} by $\rho^{N-1}V$ and integrating over $(0,L)$, we have
\begin{equation}\label{2-11}
\int_0^L\rho^{N-1}V'(\rho)^2\d\rho=\int_0^L\rho^{N-1}V(\rho)^{p+1}\d\rho+L^{N-1}V(L)V'(L).
\end{equation}
The boundary term at the origin vanishes by \eqref{2-3}, and the term at $L$ is negative. Combining \eqref{2-10} and \eqref{2-11}, we therefore obtain
\[
\int_0^{+\infty}\rho^{N-1}V'(\rho)^2\d\rho<+\infty.
\]
Hence the nonnegative function
\[
G(\rho):=\rho^{N-1}\bigl(V'(\rho)^2+V(\rho)^{p+1}\bigr)
\]
belongs to $L^1(1,+\infty)$. It follows that $\liminf_{\rho\to+\infty}\rho G(\rho)=0$, since otherwise $G(\rho)\ge c/\rho$ for all sufficiently large $\rho$. Thus there exists a sequence $\rho_j\to+\infty$ such that
\begin{equation}\label{2-12}
\rho_j^N\bigl(V'(\rho_j)^2+V(\rho_j)^{p+1}\bigr)\to0.
\end{equation}
Moreover, by \eqref{2-10} and \eqref{2-12},
\[
\rho_j^{N-1}|V(\rho_j)V'(\rho_j)|=\bigl(\rho_j^{N-2}V(\rho_j)^2\bigr)^{1/2}\bigl(\rho_j^NV'(\rho_j)^2\bigr)^{1/2}\to0.
\]
It follows that $P(\rho_j)\to0$. This contradicts Lemma~\ref{le2-1}, since $P(\rho_j)\ge P(\rho_0)>0$ for any fixed $\rho_0>0$ and all sufficiently large $j$. Hence $\rho_*<+\infty$.

It remains to examine $V$ at $\rho_*$. Since $0<V\le1$ and $V$ is decreasing, $V(\rho_*^-)$ exists. By \eqref{2-2}, $V'(\rho_*^-)$ also exists and is finite. If $V(\rho_*^-)>0$, ordinary continuation for \eqref{2-1} would extend $V$ as a positive solution beyond $\rho_*$, contrary to its definition. Thus $V(\rho_*)=0$. Passing to the limit in \eqref{2-2}, we obtain
\begin{equation}\label{2-13}
-\rho_*^{N-1}V'(\rho_*)=\int_0^{\rho_*}s^{N-1}V(s)^p\d s>0,
\end{equation}
which proves that the zero is simple.
\end{proof}

We now turn to the logarithmic slope, since the Robin condition prescribes its value at the boundary. For $0<\rho<\rho_*$, set
\begin{equation}\label{2-14}
Z(\rho):=-\frac{\rho V'(\rho)}{V(\rho)},\qquad Y(\rho):=\rho^2V(\rho)^{p-1}.
\end{equation}
Both functions are positive. By \eqref{2-1}, we have
\begin{equation}\label{2-15}
\rho Z'=Z^2-(N-2)Z+Y.
\end{equation}

\begin{proposition}\label{pro2-3}
For every $0<\rho<\rho_*$, we have
\begin{equation}\label{2-16}
\rho Z'(\rho)=\frac{2P(\rho)}{\rho^{N-2}V(\rho)^2}+\frac{p-1}{p+1}Y(\rho)>0.
\end{equation}
Moreover, $Z:(0,\rho_*)\to(0,+\infty)$ is a strictly increasing bijection.
\end{proposition}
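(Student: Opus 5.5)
We first establish the identity \eqref{2-16} by a direct algebraic rewriting of \eqref{2-15}, and then read off monotonicity and the endpoint limits. Dividing \eqref{2-4} by $\rho^{N-2}V^2/2$ and using $V'/V=-Z/\rho$, we find
\[
\frac{2P(\rho)}{\rho^{N-2}V(\rho)^2}=\rho^2\frac{V'^2}{V^2}+\frac{2}{p+1}\rho^2V^{p-1}+(N-2)\rho\frac{V'}{V}
=Z^2+\frac{2}{p+1}Y-(N-2)Z.
\]
Adding $\frac{p-1}{p+1}Y$ gives exactly $Z^2-(N-2)Z+Y$, which equals $\rho Z'$ by \eqref{2-15}. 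Positivity then follows immediately from Lemma~\ref{le2-1}, which gives $P>0$ on $(0,\rho_*)$, together with $Y>0$ and $p>1$. We also check \eqref{2-15} itself in passing: differentiating $Z=-\rho V'/V$ and substituting $V''=-\frac{N-1}{\rho}V'-V^p$ from \eqref{2-1} gives $Z'=-V'/V+\frac{N-1}{\rho}V'/V+V^{p-1}+\rho V'^2/V^2$. Multiplying by $\rho$ yields $Z-(N-1)Z+Y+Z^2$, as claimed.

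Consequently $Z$ is strictly increasing and continuous on $(0,\rho_*)$. It then suffices to identify the endpoint limits. At the origin, \eqref{2-3} gives $Z(\rho)=\rho^2/N+O(\rho^4)\to0$. At $\rho_*$, Lemma~\ref{le2-2} gives $\rho_*<\infty$, $V(\rho_*)=0$ and $V'(\rho_*)<0$. Hence $-\rho V'\to-\rho_*V'(\rho_*)>0$ while $V\to0^+$, so $Z(\rho)\to+\infty$. By the intermediate value theorem and strict monotonicity, $Z$ is a bijection onto $(0,+\infty)$.

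The only step requiring care is the algebraic identification of $2P/(\rho^{N-2}V^2)$ in terms of $Z$ and $Y$; everything else is a direct citation of Lemmas~\ref{le2-1} and~\ref{le2-2}. That identification is a short computation once $V'/V$ is replaced by $-Z/\rho$, so no serious obstacle is expected. The substantive input, the finiteness and simplicity of the first zero, has already been supplied by Lemma~\ref{le2-2}.
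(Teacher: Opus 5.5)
Your proof is correct and matches the paper's argument: you rewrite $2P/(\rho^{N-2}V^2)$ as $Z^2-(N-2)Z+\frac{2}{p+1}Y$, combine this with \eqref{2-15}, and read off $Z(0^+)=0$ from \eqref{2-3} and $Z(\rho_*^-)=+\infty$ from Lemma~\ref{le2-2}. There is one harmless slip in your optional check of \eqref{2-15}: after substituting \eqref{2-1}, the middle terms of $Z'$ are $(N-1)V'/V+\rho V^{p-1}$, not $\frac{N-1}{\rho}V'/V+V^{p-1}$, although your final expression for $\rho Z'$ is right.
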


\begin{proof}
By \eqref{2-4} and \eqref{2-14}, we have
\begin{equation}\label{2-17}
\frac{P(\rho)}{\rho^{N-2}V(\rho)^2}=\frac12\bigl(Z(\rho)^2-(N-2)Z(\rho)\bigr)+\frac{Y(\rho)}{p+1}.
\end{equation}
Combining \eqref{2-15} and \eqref{2-17}, we obtain \eqref{2-16}. Its right-hand side is positive by Lemma~\ref{le2-1}.

It remains to determine the range of $Z$. By \eqref{2-3},
\begin{equation}\label{2-18}
Z(\rho)=\frac{\rho^2}{N}+O(\rho^4)\qquad\text{as }\rho\to0^+.
\end{equation}
Thus $Z(0^+)=0$. At the other endpoint, Lemma~\ref{le2-2} implies that $V(\rho)\to0$ and $V'(\rho)\to V'(\rho_*)<0$, whence $Z(\rho)\to+\infty$. Together with the strict monotonicity, these limits prove the assertion.
\end{proof}

\section{The subcritical Robin problem}\label{s3}

Every nontrivial nonnegative radial weak solution of \eqref{1-1} is classical. Indeed, let $u\in\Hr(B_R)$ be such a weak solution, so that
\[
\int_{B_R}\nabla u\cdot\nabla\varphi\d x+\beta\int_{\partial B_R}u\varphi\d S=\int_{B_R}u^p\varphi\d x\qquad\text{for all }\varphi\in H^1(B_R).
\]
If $N\ge3$, the Sobolev embedding and $p<p_S(N)$ imply that $u^{p-1}\in L^q(B_R)$ for some $q>N/2$. The same conclusion holds in dimension two by choosing a sufficiently large finite Sobolev exponent. Standard iteration applied to $-\Delta u=(u^{p-1})u$ with the Robin condition yields $u\in L^\infty(B_R)$. By elliptic regularity, $u\in C^{2,\alpha}(\overline{B_R})$ for some $\alpha\in(0,1)$. The strong maximum principle implies that $u>0$ in $B_R$. Moreover, $u(R)>0$: otherwise the boundary condition would imply $u'(R)=0$, and uniqueness for the radial Cauchy problem at $r=R$ would force $u\equiv0$.

We can now use scaling to describe all positive radial solutions in terms of $V$. Set
\begin{equation}\label{3-1}
\sigma:=\frac{2}{p-1},\qquad U_\tau(r):=\left(\frac{\tau}{R}\right)^\sigma V\left(\frac{\tau r}{R}\right),\qquad 0\le r\le R,
\end{equation}
where $0<\tau\le\rho_*$. Since $\sigma+2=p\sigma$, $U_\tau$ satisfies $-\Delta U_\tau=U_\tau^p$ in $B_R$. It is positive on $\overline{B_R}$ if $\tau<\rho_*$ and vanishes on $\partial B_R$ if $\tau=\rho_*$.

\begin{lemma}\label{le3-1}
Every radial function $u\in C^2(\overline{B_R})$ satisfying $-\Delta u=u^p$ and $u>0$ in $B_R$ is equal to $U_\tau$ for a unique $\tau\in(0,\rho_*]$. Moreover, $\tau=\rho_*$ if and only if $u(R)=0$.
\end{lemma}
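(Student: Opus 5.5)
We reduce the lemma to uniqueness for the regular radial initial value problem \eqref{2-1}, using the scaling invariance in \eqref{3-1}. Let $u$ be as in the statement and write $u(x)=w(|x|)$. Then $w\in C^2([0,R])$, $w'(0)=0$ by radial symmetry and $C^1$ regularity at the origin, and $w''+\frac{N-1}{r}w'+w^p=0$ on $(0,R]$. Integrating against $r^{N-1}$ gives the analogue of \eqref{2-2},
\[
-w'(r)=r^{1-N}\int_0^r s^{N-1}w(s)^p\d s,
\]
so $w$ solves the Volterra equation with initial value $a:=w(0)$. Since $u>0$ in $B_R$, we have $a>0$.

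\textbf{Existence of $\tau$.} Choose $\tau$ so that $U_\tau(0)=a$, namely $(\tau/R)^\sigma=a$, i.e.\ $\tau:=Ra^{1/\sigma}=Ra^{(p-1)/2}$. By \eqref{3-1}, $U_\tau$ solves the same radial equation with $U_\tau(0)=a$ and $U_\tau'(0)=0$ wherever $V(\tau r/R)$ is defined and nonnegative. The Volterra equation has a unique local solution because $s\mapsto s^p$ is locally Lipschitz near any positive value. Hence $w=U_\tau$ on a neighbourhood of $0$, and by continuation on the maximal common interval where both are positive. On that interval, positivity keeps the nonlinearity locally Lipschitz, so standard ODE uniqueness propagates the identity.

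\textbf{Main obstacle: showing $\tau\le\rho_*$.} Suppose $\tau>\rho_*$. Then $V(\tau r/R)$ vanishes at $r_0:=R\rho_*/\tau<R$. Since $w=U_\tau$ on $[0,r_0)$, continuity gives $w(r_0)=0$ with $r_0<R$. This contradicts $u>0$ in $B_R$. Hence $\tau\le\rho_*$, and since both $w$ and $U_\tau$ are positive on $[0,R)$, we get $w=U_\tau$ on all of $[0,R]$ by continuity. The delicate point is that the identification is only guaranteed while both functions stay positive. The argument must therefore compare the first zero of $V(\tau\cdot/R)$ with $R$ before any continuation past a zero, where $s^p$ may fail to be Lipschitz or even defined. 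Lemma~\ref{le2-2} guarantees that $\rho_*$ is finite and that $V$ is positive on $[0,\rho_*)$, which is exactly what this comparison needs.

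\textbf{Uniqueness and the endpoint criterion.} Uniqueness of $\tau$ follows because $U_\tau(0)=(\tau/R)^\sigma$ is strictly increasing in $\tau$. Finally, $u(R)=U_\tau(R)=(\tau/R)^\sigma V(\tau)$. For $\tau<\rho_*$ this is positive, since $V>0$ on $[0,\rho_*)$. For $\tau=\rho_*$ it equals $0$ by \eqref{2-7}. This gives $\tau=\rho_*$ if and only if $u(R)=0$.
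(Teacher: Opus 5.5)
Your proof is correct and follows the paper's route. You identify $u$ with the rescaled profile $aV(a^{(p-1)/2}r)$ via uniqueness for the regular radial initial-value problem, take $\tau=Ra^{(p-1)/2}$, use positivity in $B_R$ to force $\tau\le\rho_*$, and get uniqueness from the strictly increasing central value $(\tau/R)^\sigma$. Beyond the paper, you spell out the continuation on the common positivity interval and the contradiction when $\tau>\rho_*$, which the paper compresses into a single sentence.
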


\begin{proof}
Let $a=u(0)>0$. Scaling and uniqueness for the regular radial initial-value problem imply that
\begin{equation}\label{3-2}
u(r)=aV\bigl(a^{(p-1)/2}r\bigr).
\end{equation}
Taking $\tau=Ra^{(p-1)/2}$, we recover \eqref{3-1}. Positivity in $B_R$ implies that $\tau\le\rho_*$, with equality precisely when $u(R)=0$. Finally, $U_\tau(0)=(\tau/R)^\sigma$, so distinct values of $\tau$ correspond to distinct central values.
\end{proof}

For a differentiable radial function $w$, write
\[
\mathcal{B}_{\beta,R}w:=w'(R)+\beta w(R).
\]
By \eqref{2-14} and \eqref{3-1}, the Robin residual of $U_\tau$ is
\begin{equation}\label{3-3}
F_{\beta,R}(\tau):=\mathcal{B}_{\beta,R}U_\tau=\frac1R\left(\frac{\tau}{R}\right)^\sigma V(\tau)\bigl(\beta R-Z(\tau)\bigr),\qquad 0<\tau<\rho_*.
\end{equation}
Thus the boundary condition is equivalent to $Z(\tau)=\beta R$.

For radial nondegeneracy, we shall differentiate \eqref{3-3} with respect to $\tau$. We first check that a radial weak solution of the interior linearized equation is regular at the origin and is uniquely determined by its value there.

\begin{lemma}\label{le3-2}
Let $p>1$ and let $u\in C^2(\overline{B_R})$ be radial and positive in $B_R$. Every $h\in\Hr(B_R)$ satisfying
\[
-\Delta h-pu^{p-1}h=0\qquad\text{weakly in }B_R
\]
has a $C^2$ representative near the origin, satisfies $h'(0)=0$, and is uniquely determined by $h(0)$. In particular, the space of regular radial solutions of this interior equation has dimension at most one.
\end{lemma}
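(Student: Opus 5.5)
We reduce the weak equation to a radial ODE on $(0,R)$, then use the integral identity to handle the origin. Write $h(x)=\eta(|x|)$ with $\eta$ locally absolutely continuous on $(0,R]$ and $\int_0^R r^{N-1}(\eta'^2+\eta^2)\d r<\infty$. This is standard for $\Hr$: the radial profile is in $H^1_{\mathrm{loc}}(0,R]$ with the weighted bound.

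\textbf{Step 1: the ODE away from the origin.} Test the weak equation with radial functions $\varphi(x)=\psi(|x|)$, $\psi\in C_c^\infty(0,R)$. This gives $(r^{N-1}\eta')'=-pr^{N-1}u^{p-1}\eta$ in the distributional sense on $(0,R)$. Since the right-hand side is continuous, $\eta\in C^2(0,R)$ and
\[
\eta''+\frac{N-1}{r}\eta'+pu^{p-1}\eta=0 \qquad\text{on }(0,R).
\]

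\textbf{Step 2: behaviour at the origin.} Test instead with radial $\psi\in C^\infty_c([0,R))$, which need not vanish at $0$. These are admissible test functions in $H^1(B_R)$, supported away from $\partial B_R$. Comparing with the pointwise ODE after integrating by parts on $(\varepsilon,R)$ shows that $\varepsilon^{N-1}\eta'(\varepsilon)\to0$ as $\varepsilon\to0^+$. The weak formulation removes any point source at the origin. Hence
\[
-r^{N-1}\eta'(r)=\int_0^r s^{N-1}pu(s)^{p-1}\eta(s)\d s.
\]

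\textbf{Step 3: boundedness of $\eta$ near $0$ (main obstacle).} We must rule out singular behaviour such as $r^{2-N}$, or $\log r$ when $N=2$. Membership in $\Hr$ alone already excludes $r^{2-N}$ for $N\ge4$, but not for $N=3$ or $N=2$. The integral identity of Step 2 rules these out in all dimensions. By Cauchy--Schwarz with the weighted $L^2$ bound,
\[
\int_0^r s^{N-1}|\eta|\d s\le Cr^{N/2},
\]
so $|\eta'(r)|\le Cr^{1-N/2}$. For $N\le3$ this is integrable, so $\eta$ is bounded. For $N\ge4$ we iterate: a bound $|\eta|\le Cr^{-a}$ feeds back into the identity to give $|\eta'|\le Cr^{1-a}$, hence $|\eta|\le Cr^{\min(0,2-a)}$ up to a logarithm at $a=2$. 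After finitely many steps $\eta$ is bounded.

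Once $\eta$ is bounded, the identity gives $|\eta'(r)|\le Cr$. Then $\eta(0^+)$ exists, $\eta'(0)=0$, and $\eta$ is $C^2$ near $0$ with $\eta''(0)=-pu(0)^{p-1}\eta(0)/N$. Thus $h$ has a $C^2$ representative as a radial function, since $\eta$ extends evenly with $\eta'(r)=O(r)$.

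\textbf{Step 4: uniqueness and dimension.} The bounded solution satisfies the Volterra equation
\[
\eta(r)=\eta(0)-\int_0^r t^{1-N}\int_0^t s^{N-1}pu^{p-1}\eta\d s\d t.
\]
Its kernel is bounded by $Cr$ in the sense that $\int_0^r t^{1-N}\int_0^t s^{N-1}\d s\d t=r^2/(2N)$. Gronwall's inequality, or contraction on a small interval followed by ordinary ODE uniqueness on $(\delta,R)$, shows that $\eta(0)=0$ forces $\eta\equiv0$. The solution map $h\mapsto h(0)$ is linear and injective, so the space has dimension at most one.
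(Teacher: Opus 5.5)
Your proof is correct, but it takes a different route from the paper. The paper gets the $C^2$ representative near the origin in one step from interior elliptic estimates followed by Schauder estimates in $\R^N$. It uses that the coefficient $pu^{p-1}$ is bounded, and smooth near $0$ because $u(0)>0$. It then reads off $h'(0)=0$ from radial symmetry, and proves uniqueness by the same small-interval estimate $M(r)\le pQr^2M(r)/(2N)$ as your Step~4.

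You instead work entirely with the radial ODE:
\begin{itemize}
\item the equation on $(0,R)$ comes from radial test functions;
\item the vanishing of the flux $\varepsilon^{N-1}\eta'(\varepsilon)$ comes from test functions that do not vanish at $0$;
\item boundedness comes from a Cauchy--Schwarz bootstrap off the weighted $L^2$ bound.
\end{itemize}
This is longer, but it is self-contained and avoids elliptic regularity theory. It also needs only continuity of $u^{p-1}$, whereas the Schauder step needs a H\"older coefficient. And it shows exactly where the weak formulation across the origin is used, which the elliptic-regularity argument hides.

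Two minor corrections, neither of which affects the validity of your argument.
\begin{itemize}
\item Your aside that membership in $\Hr$ does not exclude $r^{2-N}$ when $N=3$, or $\log r$ when $N=2$, is false. The gradients of these functions are not square integrable near $0$ for any $N\ge2$. In fact, once $L=\lim_{r\to0^+}r^{N-1}\eta'(r)$ exists (which follows from Step~1 and $\int_0^R s^{N-1}|\eta|\d s<\infty$), the bound $\int_0^R r^{N-1}\eta'^2\d r<\infty$ already forces $L=0$. So Step~2 could be shortened.
\item The bound $\eta'(r)=O(r)$ alone does not make $x\mapsto\eta(|x|)$ of class $C^2$. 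What is needed is that $\eta''(r)$ and $\eta'(r)/r$ have the same limit as $r\to0^+$. Your integral identity does give this: both tend to $-pu(0)^{p-1}\eta(0)/N$.
\end{itemize}
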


\begin{proof}
The coefficient $pu^{p-1}$ is bounded on $B_R$ and is $C^2$ near the origin. By interior elliptic estimates followed by Schauder estimates, $h$ has a $C^2$ representative near the origin. Since $h$ is radial, we have $h'(0)=0$. Integrating the radial equation, we obtain
\begin{equation}\label{3-4}
h'(r)=-pr^{1-N}\int_0^r s^{N-1}u(s)^{p-1}h(s)\d s.
\end{equation}
Suppose that $h(0)=0$. Set $M(r):=\max_{0\le s\le r}|h(s)|$ and $Q:=\|u^{p-1}\|_{L^\infty(B_R)}$. By \eqref{3-4}, for $0<t\le r$ we have
\[
|h'(t)|\le\frac{pQ}{N}tM(r),\qquad |h(t)|\le\frac{pQ}{2N}t^2M(r).
\]
Hence $M(r)\le pQr^2M(r)/(2N)$, so $M(r)=0$ for all sufficiently small $r$. Ordinary uniqueness for the radial equation then yields $h\equiv0$ in $B_R$. Applying this argument to the difference of two solutions with the same value at the origin completes the proof.
\end{proof}

\begin{proof}[Proof of Theorem~\ref{th1-1}]
By Proposition~\ref{pro2-3}, there exists a unique
\begin{equation}\label{3-5}
\tau_{\beta,R}:=Z^{-1}(\beta R)\in(0,\rho_*).
\end{equation}
In view of \eqref{3-3}, $u_{\beta,R}:=U_{\tau_{\beta,R}}$ satisfies the Robin boundary condition. Conversely, by Lemma~\ref{le3-1}, every positive radial solution of \eqref{1-1} has the form $U_\tau$. The endpoint $\tau=\rho_*$ is impossible, since $U_{\rho_*}(R)=0$ and $U_{\rho_*}'(R)<0$ by Lemma~\ref{le2-2}. Hence $0<\tau<\rho_*$, and \eqref{3-3} implies that $\tau=\tau_{\beta,R}$. This proves existence and uniqueness.

It remains to prove radial nondegeneracy. We set
\begin{equation}\label{3-6}
\Psi(r):=\left.\partial_\tau U_\tau(r)\right|_{\tau=\tau_{\beta,R}}.
\end{equation}
Differentiating the identity $-\Delta U_\tau=U_\tau^p$ at $\tau=\tau_{\beta,R}$, we have
\begin{equation}\label{3-7}
-\Delta\Psi=pu_{\beta,R}^{p-1}\Psi\quad\text{in }B_R,\qquad \Psi(0)=\sigma R^{-\sigma}\tau_{\beta,R}^{\sigma-1}\ne0.
\end{equation}
By Lemma~\ref{le3-2}, every radial weak solution of the interior equation in \eqref{1-3} is a multiple of $\Psi$.

Differentiating \eqref{3-3} at $\tau=\tau_{\beta,R}$ and using $Z(\tau_{\beta,R})=\beta R$, we obtain
\begin{equation}\label{3-8}
\mathcal{B}_{\beta,R}\Psi=F_{\beta,R}'(\tau_{\beta,R})=-\frac1R\left(\frac{\tau_{\beta,R}}{R}\right)^\sigma V(\tau_{\beta,R})Z'(\tau_{\beta,R})<0.
\end{equation}
Let $h\in\Hr(B_R)$ be a weak solution of \eqref{1-3}. Then $h=c\Psi$ for some constant $c$. Since $\Psi$ is smooth on $\overline{B_R}$, the weak Robin condition becomes $0=c\mathcal{B}_{\beta,R}\Psi$, whence $c=0$ by \eqref{3-8}.
\end{proof}

The relation $Z(\tau_{\beta,R})=\beta R$ also describes the two ends of the radial branch.

\begin{remark}\label{re3-3}
For fixed $N$, $p$, and $R$, the map $\beta\mapsto\tau_{\beta,R}$ is smooth and strictly increasing. By \eqref{2-18} and \eqref{3-5},
\begin{equation}\label{3-9}
\tau_{\beta,R}\sim\sqrt{N\beta R},\qquad u_{\beta,R}(0)\sim\left(\frac{N\beta}{R}\right)^{\frac1{p-1}}\qquad\text{as }\beta\to0^+.
\end{equation}
Since $V(\tau_{\beta,R}r/R)\to1$ uniformly for $0\le r\le R$, the asymptotic formula for $u_{\beta,R}(0)$ in \eqref{3-9} also holds for $u_{\beta,R}(r)$, uniformly in $r\in[0,R]$. This agrees with \cite[Theorem 1.5]{Chen2026a}, since $|\partial B_R|/|B_R|=N/R$. In particular, $u_{\beta,R}\to0$ uniformly on $\overline{B_R}$ as $\beta\to0^+$. At $\beta=0$, integration of the equation would imply $\int_{B_R}u^p\d x=0$, which excludes positive Neumann solutions.

As $\beta\to+\infty$, we have $\tau_{\beta,R}\to\rho_*$ and
\begin{equation}\label{3-10}
u_{\beta,R}(x)\to\left(\frac{\rho_*}{R}\right)^\sigma V\left(\frac{\rho_*|x|}{R}\right)\qquad\text{in }C^2(\overline{B_R}).
\end{equation}
The limit is the unique positive radial Dirichlet solution. To verify the convergence at the origin, it suffices to observe from \eqref{2-2} that $V'(\rho)/\rho$ extends continuously to $\rho=0$. The radial Hessian formula then yields \eqref{3-10} in Cartesian coordinates.
\end{remark}

Combining the boundary calculation in \cite[Lemma 3.1]{Chen2026b} with the positivity of the Pohozaev quantity, we obtain another proof of radial nondegeneracy.

\begin{remark}\label{re3-4}
After rescaling, we may assume that $R=1$. Let $u$ be a positive radial solution of \eqref{1-1}, and suppose that the linearized Robin problem has a nonzero radial solution $h$. By Green's identity applied to $u$ and $h$, we have
\begin{equation}\label{3-11}
(p-1)\int_{B_1}u^ph\d x=0.
\end{equation}
Set $w(r):=ru'(r)$. Differentiating the equation under dilations, we obtain
\[
-\Delta w-pu^{p-1}w=2u^p\qquad\text{in }B_1.
\]
Combining this equation with \eqref{3-11} and applying Green's identity once more, we have
\[
\bigl(w'(1)+\beta w(1)\bigr)h(1)=0.
\]
Here $h(1)\ne0$, since otherwise the Robin condition would imply $h'(1)=0$, and uniqueness for the radial Cauchy problem would force $h\equiv0$. Using the equation for $u$ and the identity $u'(1)=-\beta u(1)$, we conclude that
\begin{equation}\label{3-12}
u(1)^{p-1}=\beta(N-2-\beta).
\end{equation}
This is already impossible when $N=2$ or $\beta\ge N-2$. If $N\ge3$ and $0<\beta<N-2$, define
\[
P_u(r):=r^N\left(\frac{u'(r)^2}{2}+\frac{u(r)^{p+1}}{p+1}\right)+\frac{N-2}{2}r^{N-1}u(r)u'(r).
\]
By the same calculation as in Lemma~\ref{le2-1}, $P_u(r)>0$ for $0<r\le1$. Combining \eqref{3-12} with the Robin condition, we obtain the contradiction
\[
0<\frac{P_u(1)}{u(1)^2}=-\frac{p-1}{2(p+1)}\beta(N-2-\beta)<0.
\]
\end{remark}

\section{The critical case}\label{s4}

At the critical exponent, the normalized profile is explicit. Its logarithmic slope remains strictly increasing, with finite limit $N-2$; this accounts for the parameter restriction in \eqref{1-5}.

\begin{proof}[Proof of Theorem~\ref{th1-2}]
Let $p=(N+2)/(N-2)$. The normalized critical profile is
\begin{equation}\label{4-1}
V_c(\rho)=\left(1+\frac{\rho^2}{N(N-2)}\right)^{-\frac{N-2}{2}}.
\end{equation}
It satisfies \eqref{2-1} and is positive on $[0,+\infty)$. By a direct computation,
\begin{equation}\label{4-2}
Z_c(\rho):=-\frac{\rho V_c'(\rho)}{V_c(\rho)}=\frac{(N-2)\rho^2}{N(N-2)+\rho^2}.
\end{equation}
Thus $Z_c$ is a strictly increasing bijection from $(0,+\infty)$ onto $(0,N-2)$.

By the same scaling argument as in Lemma~\ref{le3-1}, every positive radial critical solution has the form
\begin{equation}\label{4-3}
U_\tau^c(r):=\left(\frac{\tau}{R}\right)^{\sigma_c}V_c\left(\frac{\tau r}{R}\right),\qquad \sigma_c:=\frac{N-2}{2},\qquad \tau>0.
\end{equation}
Its Robin boundary condition is equivalent to $Z_c(\tau)=\beta R$. In view of \eqref{4-2}, this equation has a unique solution if and only if $0<\beta R<N-2$, and in that case
\begin{equation}\label{4-4}
\tau^2=\frac{N(N-2)\beta R}{N-2-\beta R}.
\end{equation}
This proves the existence and uniqueness assertions.

It remains to verify radial nondegeneracy. Let $\tau$ be determined by \eqref{4-4} and set $\Psi_c:=\partial_\tau U_\tau^c$. Differentiating the equation, we have
\[
-\Delta\Psi_c=p(U_\tau^c)^{p-1}\Psi_c\quad\text{in }B_R,\qquad \Psi_c(0)=\sigma_cR^{-\sigma_c}\tau^{\sigma_c-1}\ne0.
\]
By Lemma~\ref{le3-2}, $\Psi_c$ spans the space of regular radial solutions of the interior linearized equation. Set $F_{\beta,R}^c(\tau):=\mathcal{B}_{\beta,R}U_\tau^c$. The same boundary calculation as in \eqref{3-3}, differentiated at $Z_c(\tau)=\beta R$, yields
\begin{equation}\label{4-5}
\mathcal{B}_{\beta,R}\Psi_c=(F_{\beta,R}^c)'(\tau)=-\frac1R\left(\frac{\tau}{R}\right)^{\sigma_c}V_c(\tau)Z_c'(\tau)<0,
\end{equation}
where
\[
Z_c'(\tau)=\frac{2N(N-2)^2\tau}{\bigl(N(N-2)+\tau^2\bigr)^2}>0.
\]
Consequently, every radial $H^1(B_R)$ solution of the linearized Robin problem is a multiple of $\Psi_c$, and its boundary condition forces that multiple to vanish by \eqref{4-5}. This proves Theorem~\ref{th1-2}.
\end{proof}

We finally examine nonradial elements of the linearized kernel at $\beta R=(N-2)/2$.

\begin{remark}\label{re4-1}
For $a\in B_R$ and $\lambda=(R^2-|a|^2)^{1/2}$, consider the critical bubbles \cite{Talenti1976}
\begin{equation}\label{4-6}
U_{\lambda,a}(x):=[N(N-2)]^{\frac{N-2}{4}}\left(\frac{\lambda}{\lambda^2+|x-a|^2}\right)^{\frac{N-2}{2}}.
\end{equation}
These functions solve $-\Delta U=U^{p_S(N)}$ in $\R^N$. On $|x|=R$, using $\lambda^2+|a|^2=R^2$, we have
\begin{equation}\label{4-7}
\frac{\partial_\nu U_{\lambda,a}}{U_{\lambda,a}}=-\frac{(N-2)(R^2-a\cdot x)}{R\bigl(\lambda^2+|x-a|^2\bigr)}=-\frac{N-2}{2R}.
\end{equation}
Hence, for $\beta R=(N-2)/2$, the family $a\mapsto U_{\sqrt{R^2-|a|^2},a}$ consists of positive solutions of \eqref{1-1}. This family is a reparametrization and rescaling of the family in \cite[Theorem 1.1(iii)]{Dou2027}. They are nonradial about the origin when $a\ne0$ and pass through the radial solution $u_c:=U_{R,0}$ at $a=0$.

Differentiating this family at $a=0$, we obtain the following solutions of the full linearized problem:
\begin{equation}\label{4-8}
\left.\frac{\partial}{\partial a_j}U_{\sqrt{R^2-|a|^2},a}(x)\right|_{a=0}=\frac{(N-2)x_j}{R^2+|x|^2}u_c(x),\qquad j=1,\ldots,N.
\end{equation}
These are $N$ linearly independent first spherical-harmonic modes. Thus $u_c$ is radially nondegenerate by Theorem~\ref{th1-2}, while its full linearized kernel contains the nonradial functions in \eqref{4-8}.
\end{remark}

\bigskip
\noindent{\bf Data Availability.} No datasets were generated or analyzed during the current study.

    \medskip

\noindent{\bf Competing Interests.} The authors declare that they have no competing interests.

\medskip

\noindent{\bf AI declarations.}  The authors used OpenAI models to assist with some calculations, as well as with language editing and refinement of the exposition. The authors take full responsibility for the content of this article.

\end{document}